\newcommand{\pdim}{\mathrm{pdim}}
\newcommand{\ext}{\mathrm{Ext}}
\newcommand{\tor}{\mathrm{Tor}}
\newcommand{\HH}{\mathrm{HH}}
\newcommand{\cd}{\mathrm{cd}}
\newcommand{\Ker}{\mathrm{Ker}}
\newcommand{\id}{\mathrm{id}}
\newtheorem{thm}{Theorem}[section]
\newtheorem{prop}[thm]{Proposition}
\theoremstyle{remark}
\newtheorem{rem}[thm]{Remark}
\newtheorem{ex}[thm]{Example}
\theoremstyle{definition}
\newtheorem{defi}[thm]{Definition}
\newtheorem*{nota}{Notations}
\begin{document}

\title{Homological smoothness of Hopf-Galois extensions}
\date{}
\author{Julian Le Clainche\thanks{Université Clermont Auvergne, CNRS, LMBP, F-63000 CLERMONT-FERRAND, FRANCE}}

\maketitle

\begin{abstract}
We show that if  $H$ is a Hopf algebra with bijective antipode and $B\subset A$ is a faithfully flat $H$-Galois extension, then $A$ is homologically smooth  if $H$ and $B$ are.    
\end{abstract}

\section{Introduction}

An algebra $A$ is said to be homologically smooth if $A$ admits, as an $A$-bimodule, a finite length resolution by finitely generated projective $A$-bimodules. Homological smoothness is an appropriate analogue of regularity for (noncommutative) algebras, and is the basic condition involved in homological duality questions for algebras, see \cite{bieri_groups_1973,van_den_bergh_relation_1998,van_den_bergh_erratum_2002,ginzburg_calabi-yau_nodate}. The aim of this paper is to provide a result that produces new examples of homologically smooth algebras, in the setting of Hopf-Galois extensions, the analogue of principal bundles in noncommutative algebra. 

Recall \cite{KT81} that if $H$ is a Hopf algebra and $A$ is a $H$-comodule algebra, the algebra extension $A^{\text{co}H} \subset A$ is said to be an $H$-Galois extension if a certain canonical map $\beta : A \otimes_B A \to A\otimes H$ is bijective, see Section \ref{Hopf-Galois}.
This framework includes examples of various nature such as Hopf crossed products \cite{montgomery_hopf_1993} or exact sequences of Hopf algebras \cite{andruskiewitsch_extensions_1996} (in particular exact sequences of groups).

In this setting, our main result is as follows:

\begin{thm}\label{th smoothness}
Let $H$ be a Hopf algebra with bijective antipode and let $B\subset A$ be an $H$-Galois extension such that $A$ is faithfully flat as left and right $B$-module. If $H$ and $B$ are homologically smooth algebras, then $A$ is homologically smooth as well. 
\end{thm}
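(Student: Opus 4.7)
The plan is to produce the required finite length, finitely generated projective $A^e$-resolution of $A$ by combining two partial resolutions: a ``fibre direction'' coming from homological smoothness of $H$ and a ``base direction'' coming from homological smoothness of $B$, with the Galois isomorphism $\beta : A \otimes_B A \xrightarrow{\sim} A \otimes H$ serving as the glue between them.

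From the smoothness of $H$ together with bijectivity of the antipode, I would first extract a finite length resolution $P_\bullet \to k$ of the trivial module by finitely generated projective right $H$-modules. For any right $H$-module $R$, the space $L(R) := A \otimes R$ carries an $A$-bimodule structure given by $b \cdot (a \otimes r) \cdot c = bac_{(0)} \otimes rc_{(1)}$, using the $H$-coaction of $A$. The functor $L$ is exact (it is just $A \otimes -$ on vector spaces), satisfies $L(k) = A$, and via $\beta$ is identified with $A \otimes_B A$ when $R = H$. Applied to $P_\bullet$, it produces a finite length resolution $L(P_\bullet) \to A$ of $A$ as $A$-bimodule whose terms are each direct summands of a finite power of $A \otimes_B A$ --- in other words, $A^e$-projective relative to $B$ but not yet absolutely projective.

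From the smoothness of $B$, take a finite length resolution $Q_\bullet \to B$ by finitely generated projective $B^e$-modules. The functor $A \otimes_B (-) \otimes_B A$ is exact by the faithful flatness hypothesis, so it produces a finite length resolution $A \otimes_B Q_\bullet \otimes_B A \to A \otimes_B A$ whose terms are direct summands of $(A \otimes A)^{\oplus \ell_n}$, hence are honestly finitely generated projective as $A^e$-modules.

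It then remains to splice the two resolutions: each relatively $B$-projective term $L(P_n)$ should be further resolved using the second construction, and the resulting double complex should totalise to the desired finite length, finitely generated projective $A^e$-resolution of $A$, of length at most $\pdim_H(k) + \pdim_{B^e}(B)$. The main obstacle is precisely this assembly step. The cleanest route would be to form a single bicomplex of shape $A \otimes_B Q_\bullet \otimes_B A \otimes_H P_\bullet$ in which the rightmost tensor uses the right $H$-action on $A \otimes_B A$ transported from $A \otimes H$ by $\beta$; but one must verify that this $H$-action extends coherently along the $B^e$-resolution $Q_\bullet$, so that every bidegree term is an honest finitely generated projective $A^e$-module and the total complex is exact in positive degrees. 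This coherence is the technical heart of the argument and is presumably where the faithful flatness of $A$ over $B$ and bijectivity of the antipode are both essentially used, either directly or through the equivalence between suitable categories of relative Hopf bimodules and $B^e$-modules.
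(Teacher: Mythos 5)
Your two partial constructions are sound: $L(R)=A\otimes R$ with $b\cdot(a\otimes r)\cdot c=bac_{(0)}\otimes rc_{(1)}$ is indeed an exact functor with $L(k_\varepsilon)\simeq A$ and $L(H)\simeq A\otimes_B A$ via $\beta$, so $L(P_\bullet)\to A$ is a finite exact complex of $A$-bimodules whose terms are direct summands of finite powers of $A\otimes_B A$; and $A\otimes_B Q_\bullet\otimes_B A\to A\otimes_B A$ is a finite resolution by finitely generated projective $A^e$-modules, since $A^e$ is flat over $B^e$. But the assembly step, which you yourself flag as the ``technical heart'' and leave unverified, is exactly where the theorem lives, and the specific mechanism you propose does not work as stated: the right $H$-action on $A\otimes_B A$ transported from $A\otimes H$ by $\beta$ is right multiplication on the $H$-leg, and while it commutes with the left $A$-action, it does not commute with the right $A$-action (one gets $ac_{(0)}\otimes hkc_{(1)}$ versus $ac_{(0)}\otimes hc_{(1)}k$). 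Consequently $-\otimes_H P_q$ does not even produce $A$-bimodules from $A\otimes_B A$, let alone projective ones, and there is no evident $H$-action to ``extend coherently'' along $Q_\bullet$. This is not an accident: in Stefan's framework the $H$-action exists only on $\HH_\bullet(B,M)$, i.e.\ after the $B$-step derived functor has been applied, which is precisely why the paper argues through the spectral sequence $\tor^H_p(k_\varepsilon,\HH_q(B,M))\Rightarrow \HH_{p+q}(A,M)$ combined with the Bieri--Eckmann $\tor$-criterion for $FP_\infty$ (Proposition \ref{characterization FP}) and the cohomological spectral sequence for finiteness of $\cd(A)$, rather than by building any explicit $A^e$-resolution of $A$.

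For what it is worth, your strategy can be repaired without any $H$-action on resolutions, and then it becomes a genuinely different, resolution-theoretic proof: each $L(P_n)$ is a direct summand of finitely many copies of $A\otimes_B A$, and your second construction shows $A\otimes_B A$ is of type $FP$ over $A^e$; since type $FP$ passes to direct summands, and a module admitting a finite exact resolution by modules of type $FP$ is itself of type $FP$ (split the complex into short exact sequences and use the standard closure properties of $FP_\infty$ together with finiteness of the projective dimensions), exactness of $L(P_\bullet)\to A$ yields that $A$ is of type $FP$ over $A^e$, with $\cd(A)\leq \cd(B)+\cd(H)$ as in Proposition \ref{prop:finitecd}. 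As submitted, however, the proposal stops exactly at its decisive step, so it is not yet a proof.
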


Similar results were known to be true in some  particular cases, notably in the setting of twisted Calabi-Yau algebras: the case of Galois objects (i.e. for $B=k$) was done by Yu \cite{yu_hopf-galois_2016} and smash products of algebras by Hopf algebras were studied by Le Meur \cite{le_meur_smash_2019}.

The main ingredients to prove Theorem \ref{th smoothness} are
\begin{enumerate}
    \item Stefan's spectral sequences \cite{stefan_hochschild_1995} for Hopf-Galois extensions; 
    \item a smoothness criterion using ${\tor}$ due to Bieri-Eckmann \cite{bieri_homological_1983}, which seems to have been slightly forgotten in the recent literature.
\end{enumerate}

The paper is organized as follows. Section 2 consists of preliminaries about homological finiteness of modules and reminders about Hochschild cohomology. In section 3 we recall the definition of Hopf-Galois extensions and we introduce Stefan's spectral sequences. Section 4 is devoted to the proof of Theorem \ref{th smoothness} and in the final Section 5, we present an illustrating example of Theorem \ref{th smoothness}.

\begin{nota}
Throughout the paper, we work over a field $k$, and all algebras are (unital) $k$-algebras.
If $A$ is an algebra, the opposite algebra is denoted $A^{op}$ and the enveloping algebra $A\otimes A^{op}$ is denoted $A^e$.
The category of right (resp. left) $A$-modules is denoted $\mathcal{M}_{A}$ (resp. ${}_A \mathcal{M}$).
We have $\mathcal M_A={_{A^{op}}\mathcal{M}}$, hence any definition we give for left modules has an obvious analogue for right modules, that we will not necessarily give.
An $A$-bimodule structure is equivalent to a left (or right) $A^e$-module structure. Indeed, if $M$ is an $A$-bimodule, a left (resp. right) $A^e$-module structure on $M$ is defined by $$(a\otimes b) \cdot m = amb \text{ (resp. } m\cdot (a\otimes b) = bma\text{ )} \quad \mathrm{for} \quad a,b\in A \quad \mathrm{and}\quad m\in M.$$
The  category of $A$-bimodule is thus identified with the category $\mathcal{M}_{A^e}$, or with the category ${}_{A^e}\mathcal{M}$.

If $M$ and $N$ are left $A$-modules, $\ext$ spaces (\cite{weibel_introduction_2008}) are denoted $\ext_A^\bullet (M,N)$ and if $P$ is a right $A$-module $\tor$ spaces are denoted $\tor^A_\bullet(P,M)$. 
We will also need to consider $\ext$ spaces in categories of right modules, which we denote $\ext^\bullet_{A^{op}}(-,-)$.

 

If $H$ is a Hopf algebra, its comultiplication, counit and antipode are  denoted $\Delta, ~ \varepsilon$ and $S$ and we will use Sweedler notation in the usual way, i.e. for $h \in H$, we write $\Delta(h)=h_{(1)}\otimes h_{(2)}$. See \cite{montgomery_hopf_1993}.
\end{nota}

\section{Homological preliminaries}

In this section we recall the various homological ingredients  needed in the paper. 

\begin{defi}
    Let $A$ be an algebra. The \textit{projective dimension} of an $A$-module $M$ is defined by 
        \[\pdim_A(M) := \min\left\{ n \in \mathbb{N},~ M \text{ admits a length} \ n \ \text{resolution by projective } A\text{-modules} \right\} \in \mathbb{N}\cup \{\infty\}.\]
\end{defi}

The projective dimension can as well be characterized by
\begin{align*}
\pdim_A(M) &= \min\left\{ n \in \mathbb{N},~ \ext_A^{n+1}(M,N)=\{0\} \text{ for any $A$-module $N$}  \right\}  \\
& = \max \left\{ n \in \mathbb{N},~ \ext^{n}_A(M,N)\not=\{0\} \text{ for some $A$-module $N$}.  \right\} \
\end{align*}



We now recall various finiteness conditions on modules.

\begin{defi}
    Let $A$ be an algebra and let $M$ be an $A$-module. 
    \begin{enumerate}
        \item The $A$-module $M$ is said to be \textit{of type $FP_\infty$} if it admits a projective resolution $P_\bullet \to M$ with $P_i$ finitely generated for all $i$. 
        \item The $A$-module $M$ is said to be \textit{of type FP} if it admits a finitely generated projective resolution of finite length.
    \end{enumerate}
\end{defi}

The following result characterizes modules of type $FP$ among those of type $FP_\infty$, see e.g \cite[Chapter VIII]{brown_cohomology_1982}.

\begin{prop}\label{prop:FP-FPinfty}
Let $A$ be an algebra. An $A$-module $M$ is of type $FP$ if and only if it is of type $FP_\infty$ and $\pdim_A(M)$ is finite. In this case we have     
\[\pdim_A(M) = \max\left\{ n \in \mathbb{N},~ \ext_A^{n}(M,A)\not=\{0\}   \right\}\]
\end{prop}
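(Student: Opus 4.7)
The plan is to treat the equivalence and the dimension formula separately, both relying on the standard ``truncation of a resolution'' trick and on the fact that $\ext^\bullet_A(M,-)$ commutes with arbitrary direct sums when $M$ is of type $FP_\infty$.

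The forward implication is immediate: a finitely generated projective resolution of finite length can be completed by zeros to an $FP_\infty$ resolution, and it bounds $\pdim_A(M)$ by its length. For the converse, suppose $M$ is of type $FP_\infty$ and $d:=\pdim_A(M)<\infty$. Choose a resolution $P_\bullet\to M$ with each $P_i$ finitely generated projective, and let $K:=\ker(P_{d-1}\to P_{d-2})$ (with $P_{-1}=M$). Since $P_d$ surjects onto $K$, the module $K$ is finitely generated. Moreover, the exactness of the truncated complex
\[
0\to K\to P_{d-1}\to \cdots \to P_0\to M\to 0
\]
together with the vanishing $\ext_A^{d+1}(M,-)=0$ gives a standard dimension-shift argument showing $\ext_A^{1}(K,-)=\ext_A^{d+1}(M,-)=0$, hence $K$ is projective. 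This produces a finitely generated projective resolution of length $d$, so $M$ is of type $FP$.

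For the formula, the inequality $\max\{n\mid \ext_A^n(M,A)\neq 0\}\leq \pdim_A(M)$ is immediate from the vanishing characterization of $\pdim$. For the reverse inequality, set $d=\pdim_A(M)$ and note that since $M$ admits a resolution by finitely generated projectives (at least up to degree $d$, after part one), the functor $\ext_A^d(M,-)$ commutes with arbitrary direct sums: this follows from computing $\ext$ via such a resolution and from the fact that $\Hom_A(P,-)$ commutes with direct sums whenever $P$ is finitely generated projective. The main step is now to show $\ext_A^d(M,A)\neq 0$. If it vanished, then by the direct-sum commutation $\ext_A^d(M,A^{(I)})=0$ for any set $I$. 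But every $A$-module $N$ fits into a short exact sequence $0\to N'\to A^{(I)}\to N\to 0$, and the tail of the associated long exact sequence
\[
\ext_A^d(M,A^{(I)})\to \ext_A^d(M,N)\to \ext_A^{d+1}(M,N')=0
\]
(using $\pdim_A(M)=d$) would force $\ext_A^d(M,N)=0$ for every $N$, contradicting the definition of projective dimension.

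The main obstacle is the last step: one must be careful that the direct-sum commutation of $\ext_A^d(M,-)$ really does hold in this generality, which is precisely why the $FP_\infty$ hypothesis (and not merely finite presentation) is used. Once this is secured, the dimension formula drops out from a one-line application of the long exact sequence.
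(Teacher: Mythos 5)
Your proof is correct, and it is essentially the standard argument the paper points to (it gives no proof of its own, only a citation to Brown, \emph{Cohomology of Groups}, Ch.~VIII): truncate a finitely generated projective resolution at the projective dimension to get type $FP$, and use that $\ext_A^d(M,-)$ commutes with direct sums together with a free presentation of an arbitrary module to force $\ext_A^d(M,A)\neq\{0\}$. No discrepancies to report.
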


The following result is \cite[Corollary 1.6]{bieri_homological_1983}. The implication (iii)$\Rightarrow$(i) is due to Bieri-Eckmann \cite{Bieri_Eckmann74}, and  gives an effective condition to check that a module is of type $FP_\infty$ and  will be useful in section \ref{Smoothness}.

\begin{prop}\label{characterization FP}
Let $A$ be an algebra and $M$ a left (resp. right) $A$-module. The following conditions are equivalent: 
\begin{enumerate}[label= (\roman*)]
    \item The left (resp. right) $A$-module $M$ is of type $FP_\infty$;,
    \item For any direct product $\prod_{i\in I}N_i$ of right (resp. left) $A$-modules, the natural map  $\tor_k^A(\prod_{i \in I}N_i,M)\to \prod_{i\in I}\tor_k^A(N_i,M)$ (resp. $\tor_k^A(M,\prod_{i \in I}N_i)\to \prod_{i\in I}\tor_k^A(M,N_i)$ is an isomorphism for all $k\geq0$;
    \item For any direct product $\prod A$ of arbitrary many copies of $A$, we have $\tor_i^{A}(\prod A, M)= 0$ (resp. $\tor_i^{A}(N, \prod A)= 0$) for $i\geq 1$ and the natural map $\left(\prod A \right) \otimes_{A} M \to \prod M$(resp. $M \otimes_{A} \left(\prod A\right) \to \prod M$) is an isomorphism.
\end{enumerate}
\end{prop}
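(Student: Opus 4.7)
The plan is to prove the cycle (i) $\Rightarrow$ (ii) $\Rightarrow$ (iii) $\Rightarrow$ (i); the first two implications are routine and the last, due to Bieri--Eckmann, carries the weight. Throughout I treat the case of a left module $M$, the right case being analogous.

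For (i) $\Rightarrow$ (ii), I would fix a resolution $P_\bullet \to M$ by finitely generated projective $A$-modules. Since each $P_n$ is a direct summand of some finite free module $A^{k_n}$, the canonical comparison $\bigl(\prod_i N_i\bigr)\otimes_A P_n \to \prod_i (N_i\otimes_A P_n)$ is an isomorphism: trivially so for $A$, hence for $A^{k_n}$, hence for its direct summand $P_n$. Since arbitrary products are exact in $\mathcal{M}_A$, taking homology of the resulting isomorphism of complexes yields the required identification of $\tor$. The implication (ii) $\Rightarrow$ (iii) is then immediate by specializing to the constant family $N_i = A$, using $\tor_k^A(A,M) = 0$ for $k \geq 1$ and $\tor_0^A(A,M) = M$.

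The substantive step is (iii) $\Rightarrow$ (i), which I would split in two stages. In the first stage I would upgrade the $k=0$ statement of (iii) to finite presentation of $M$ via the Lazard--Lenzing criterion: a left $A$-module is finitely presented if and only if $(\prod_I A)\otimes_A M \to \prod_I M$ is bijective for every set $I$. Indeed surjectivity for $I = M$ already produces a finite spanning set, since any preimage of the diagonal $(m)_{m \in M}$ involves only finitely many simple tensors $(a^j_m)_m \otimes x_j$, forcing every $m \in M$ to be a linear combination of the $x_j$; the same argument applied to the kernel of a finitely generated free presentation then yields finite presentation. This provides a short exact sequence $0 \to K \to F_0 \to M \to 0$ with $F_0$ finitely generated free and $K$ finitely generated. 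In the second stage I would show $K$ inherits property (iii) and then iterate. Applying $\tor^A(\prod A, -)$ to this sequence and using freeness of $F_0$, the long exact sequence gives the dimension shift $\tor_i^A(\prod A, K) \cong \tor_{i+1}^A(\prod A, M)$ for $i \geq 1$, which vanishes by hypothesis. The tensor condition for $K$ follows from a five-lemma argument comparing the four-term exact sequence obtained from $(\prod A)\otimes_A -$ with the one obtained from $\prod(-)$: the comparison map is an isomorphism on $F_0$ (finite free) and on $M$ (by hypothesis), hence on $K$. Iterating with $K$ in place of $M$ assembles a resolution of $M$ by finitely generated free modules, establishing (i).

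The main obstacle is the first stage of (iii) $\Rightarrow$ (i), i.e., the Lazard--Lenzing-type extraction of finite presentation from a purely Tor-theoretic hypothesis; once this is in hand the inductive step is essentially dimension shifting and diagram chasing, and the positive-degree vanishing of $\tor^A(\prod A, M)$ in (iii) is exactly what propagates the induction forward.
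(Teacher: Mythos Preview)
Your proposal is correct and follows the standard Bieri--Eckmann argument. Note, however, that the paper does not supply its own proof of this proposition: it simply records the statement as \cite[Corollary~1.6]{bieri_homological_1983}, attributing the key implication (iii)$\Rightarrow$(i) to Bieri--Eckmann \cite{Bieri_Eckmann74}, so there is nothing in the paper to compare your argument against beyond the cited sources, whose approach yours faithfully reproduces.

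One small organizational remark: in your first stage you assert that $K$ is finitely generated before running the five-lemma comparison, which you defer to the second stage. In fact the surjectivity of $(\prod A)\otimes_A K \to \prod K$ needed for finite generation of $K$ already follows from a diagram chase using only the isomorphisms at $F_0$ and $M$ (no $\tor_1$-vanishing required), so the logic is sound; but it would read more cleanly if you established the full isomorphism for $K$ in one go, invoking $\tor_1^A(\prod A,M)=0$ for injectivity, and then deduced both finite generation of $K$ and the inductive step together.
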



There are similar results for $\ext$ functors, but will only use the following particular instance.

\begin{prop}\label{FP ext}
    Let $A$ be an algebra and $M$ a right (resp. left) $A$-module, if $M$ is of type $FP_\infty$ then $\ext_{A^{op}}(M,-)$ (resp. $\ext_A(M,-)$) commutes with direct sums.
\end{prop}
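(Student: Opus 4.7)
The approach is to compute $\ext$ from a resolution by finitely generated projectives and then transfer the commutation with direct sums from the resolution level up to the cohomology level. Since $M$ is of type $FP_\infty$, I would fix a projective resolution $P_\bullet \to M$ in $\mathcal{M}_A$ with each $P_n$ finitely generated, so that for any right $A$-module $N$ the space $\ext^n_{A^{op}}(M,N)$ is computed as the $n$-th cohomology of the complex $\Hom_{A^{op}}(P_\bullet, N)$.

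The heart of the argument is the elementary observation that $\Hom_{A^{op}}(P, -)$ commutes with arbitrary direct sums whenever $P$ is finitely generated: given generators $p_1, \ldots, p_r$ of $P$ and a morphism $f : P \to \bigoplus_{i \in I} N_i$, each $f(p_j)$ has finite support in $I$, so $f$ factors through the finite sub-sum indexed by the (finite) union of these supports. This shows that the canonical injection $\bigoplus_i \Hom_{A^{op}}(P, N_i) \hookrightarrow \Hom_{A^{op}}(P, \bigoplus_i N_i)$ is also surjective, hence an isomorphism. Applying this degreewise to $P_\bullet$ yields an isomorphism of complexes $\bigoplus_i \Hom_{A^{op}}(P_\bullet, N_i) \cong \Hom_{A^{op}}(P_\bullet, \bigoplus_i N_i)$.

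To conclude I would invoke the exactness of direct sums of $k$-vector spaces, which lets cohomology commute with $\bigoplus$, producing $\bigoplus_i \ext^n_{A^{op}}(M, N_i) \cong \ext^n_{A^{op}}(M, \bigoplus_i N_i)$ for all $n \geq 0$. The left-module version is proved identically. I do not anticipate any real obstacle here: the result is essentially an immediate consequence of the definition of $FP_\infty$ combined with the standard fact that finitely generated modules are small with respect to direct sums; the only point to keep in mind is that one must pick a resolution with \emph{each} $P_n$ finitely generated, which is exactly what $FP_\infty$ provides (and what a mere $\pdim < \infty$ assumption would not).
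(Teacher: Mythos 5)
Your proof is correct, and it is exactly the standard argument: the paper states Proposition \ref{FP ext} without proof, treating it as a known consequence of the $FP_\infty$ hypothesis, and your reasoning (choose a resolution by finitely generated projectives, use that $\Hom_{A^{op}}(P,-)$ commutes with arbitrary direct sums when $P$ is finitely generated, then pass to cohomology using exactness of direct sums) is precisely the argument being left implicit. Your closing remark is also the right one to emphasize: it is the $FP_\infty$ condition, not mere finiteness of projective dimension, that guarantees each term of the resolution can be taken finitely generated.
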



Those above general homological finiteness notions specify to the case of bimodules which is our case of interest when considering Hochschild (co)homology.

\begin{defi}\label{defi : Hochschild}
    Let $A$ be an algebra.
\begin{enumerate}
    \item The \textit{cohomological dimension} of $A$ is defined to be
$\cd(A) := \pdim_{A^e}(A)$.
\item The algebra $A$ is said to be \textit{homologically smooth} if $A$ is of type $FP$ as a left $A^e$-module.
\item Let $M$ an $A$-bimodule. The \textit{Hochschild cohomology} spaces of $A$ with coefficients in $M$ are the vector spaces $\HH^\bullet(A,M) := \ext_{A^e}^\bullet(A,M)$. The \textit{Hochschild homology} spaces of $A$ with coefficients in $M$ are the vector spaces $\HH_\bullet(A,M) := \tor^{A^e}_\bullet(M,A)$.
\end{enumerate}    
\end{defi}

Notice that if $A$ is a homologically smooth algebra, then $\cd(A) = \max \left\{n\in \mathbb{N},~ \HH^n(A,A^e) \neq 0\right\}$, by Proposition \ref{prop:FP-FPinfty}.



We finally record that if $A=H$ is a Hopf algebra, homological smoothness can be expressed using $H$-modules rather than $H$-bimodules. 

\begin{thm}\cite[Proposition A.2]{wang_calabi-yau_2017}\label{duality Hopf}
     Let $H$ be a Hopf algebra. The following assertions are equivalent: 
     \begin{enumerate}[label= (\roman*)]
         \item The algebra $H$ is homologically smooth;
         \item The right $H$-module $k_\varepsilon$ is of type $FP$;
         \item The left $H$-module $_\varepsilon k$ is of type $FP$.
     \end{enumerate}
\end{thm}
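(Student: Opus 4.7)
My plan is to establish (i)$\Leftrightarrow$(ii) by constructing a pair of ``untwisting'' functors between $\mathcal{M}_H$ and $\mathcal{M}_{H^e}$ which interchange the trivial module $k_\varepsilon$ and the regular bimodule $H$, and which carry finitely generated projective resolutions in both directions. The equivalence (i)$\Leftrightarrow$(iii) will then follow by the exactly symmetric construction on the other side.

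For (ii)$\Rightarrow$(i), I would define $T : \mathcal{M}_H \to \mathcal{M}_{H^e}$ by sending a right $H$-module $N$ to $H \otimes N$ equipped with the bimodule structure $a \cdot (x \otimes n) \cdot b = ax b_{(1)} \otimes n b_{(2)}$. The crucial computation is the ``untwisting'' isomorphism of $H$-bimodules $T(H_{\mathrm{reg}}) \cong H^e$ given by $x \otimes y \mapsto x S(y_{(1)}) \otimes y_{(2)}$, which trivialises the diagonal right action using the antipode. Consequently $T$ sends finitely generated free right $H$-modules to finitely generated free $H$-bimodules, and hence finitely generated projectives to finitely generated projectives. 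Since $T$ is exact (as a $k$-vector space functor it is just $H \otimes_k -$) and $T(k_\varepsilon)$ is directly identified with the regular bimodule $H$, applying $T$ to a finite finitely generated projective resolution of $k_\varepsilon$ yields one of $H$ over $H^e$.

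For (i)$\Rightarrow$(ii), I would use $R : \mathcal{M}_{H^e} \to \mathcal{M}_H$, $M \mapsto k_\varepsilon \otimes_H M$, taking the tensor product along the left $H$-action on $M$ and inheriting a right $H$-action from the right $H$-action on $M$. One directly verifies $R(H^e) \cong H$ (the free right $H$-module of rank $1$) and $R(H) \cong k_\varepsilon$, so $R$ sends finitely generated projective $H$-bimodules to finitely generated projective right $H$-modules. The delicate point is that $R$ is only right exact. However, the free $H$-bimodule $H^e = H \otimes H^{op}$ is free as a \emph{left} $H$-module (the left action ignores the second tensor factor), so every finitely generated projective $H$-bimodule is projective as a left $H$-module and therefore acyclic for $k_\varepsilon \otimes_H -$. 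Combined with the fact that the regular left $H$-module $H$ is free, so that $\tor_i^H(k_\varepsilon, H) = 0$ for $i \geq 1$, this implies that $R$ carries a finite finitely generated projective $H^e$-resolution $Q_\bullet \to H$ to a finite finitely generated projective $H$-resolution $R(Q_\bullet) \to k_\varepsilon$, showing that $k_\varepsilon$ is of type $FP$.

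For (i)$\Leftrightarrow$(iii) I would use the symmetric functors $N \mapsto N \otimes H$ and $M \mapsto M \otimes_H {}_\varepsilon k$ playing the dual roles for left $H$-modules. The main obstacle, to my mind, is the (i)$\Rightarrow$(ii) direction: although $R = k_\varepsilon \otimes_H -$ is not exact on all bimodules, it is automatically acyclic on the finitely generated projective bimodules occurring in the resolution of $H$, and recognising this requires the Hopf-algebraic observation that the free bimodule decomposes as a free left $H$-module along a single tensor factor.
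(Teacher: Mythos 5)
Your argument is correct and is essentially the standard proof of this statement, which the paper does not prove itself but quotes from \cite[Proposition A.2]{wang_calabi-yau_2017}: the functor $H\otimes_k -$ with diagonal right action, the untwisting isomorphism $x\otimes y\mapsto xS(y_{(1)})\otimes y_{(2)}$ identifying $T(H)$ with the free bimodule $H^e$, and the coinvariants functor $k_\varepsilon\otimes_H -$ are exactly the tools used there. Your handling of the one delicate point in (i)$\Rightarrow$(ii) — that the terms of a finitely generated projective $H^e$-resolution of $H$ are projective (hence flat) as left $H$-modules because $H^e$ is free over $H$ on one side, so the resolution stays exact after applying $k_\varepsilon\otimes_H -$ — is sound, and none of the steps require bijectivity of the antipode, consistently with the statement.
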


\section{Hopf-Galois extensions}\label{Hopf-Galois}


\subsection{Definitions and examples}\label{HG def}

Recall that if $H$ is a Hopf algebra, an $H$-comodule algebra is an algebra $A$ together with an algebra  map $\rho : A \to A \otimes H$ making it into an $H$-comodule. 

\begin{defi}
Let $H$ be a Hopf algebra. An $H$-comodule algebra $A$ is said to be \textit{an $H$-Galois extension} of $B := A^{\text{co}H} = \left\{a\in A,~ a_{(0)}\otimes a_{(1)} = a \otimes 1\right\}$ if the (canonical) map 
\begin{align*}
    \beta :~  A\otimes_B A &\longrightarrow A \otimes H\\
    a \otimes_B b &\longmapsto ab_{(0)}\otimes b_{(1)}
\end{align*}
is bijective. An $H$-Galois extension of the base field $k$ is called \textit{an $H$-Galois object}.
\end{defi}



An overview of the theory of Hopf-Galois extensions can be found in \cite{Schau2004}. We now list a number of important examples.

\begin{ex}
Let $H$ be a Hopf algebra. Then $H$ is an $H$-Galois object, with the right $H$-comodule structure given by the comultiplication.
\end{ex}

\begin{ex}
    Let $G$ be a group and let $A$ a $G$-graded algebra, i.e. an algebra with a decomposition $A = \bigoplus\limits_{g\in G} A_g$ such that $A_g A_h \subset A_{gh}$ for any $g,h \in G$ and $1 \in A_e$, where $e$ denotes the neutral element of $G$. The $G$-grading makes $A$ into a $kG$-comodule algebra, and $A_e \subset A$ is a $kG$-Galois extension if and only if $A$ is $G$-strongly graded, which means that $A_g A_h = A_{gh}$ for any $g,h \in G$.

    In that case, the inverse of $\beta$ is given by $$\beta^{-1}(a\otimes g) = \sum\limits_i a c_i \otimes_{A_e} d_i \text{ for } a\in A, g\in G$$
    where $c_i \in A_{g^{-1}}$ and $d_i\in A_g$ are elements such that $\sum\limits_i c_i d_i = 1$.
\end{ex}

\begin{ex}
Let $H$ be a Hopf algebra and let $B$ be a left $H$-module algebra, i.e. $B$ is a left $H$-module  with $h\cdot(ab) = (h_{(1)}\cdot a)(h_{(2)}\cdot b)$ and $h.1 = \varepsilon(h)1$ for any $h\in H$, $a,b\in $B. Let $A$ be the smash product algebra $B\# H$. Then $A$ is an $H$-Galois extension of $B$ where the $H$-comodule structure on $A$ is given by 
$$a\# h \mapsto (a \# h_{(1)}) \otimes h_{(2)}$$
and the inverse of $\beta$ is 
\begin{align*}
   \beta^{-1}  : ~ (B\# H) \otimes H &\longrightarrow  (B\# H)\otimes_B (B\# H)\\
   (a\#h)\otimes k & \longmapsto  (a \# h S(k_{(1))}) \otimes_B (1\# k_{(2)})
\end{align*}
\end{ex}

\begin{ex} \label{exact sequence}
    Let  $p : A \to H$ be a surjective Hopf algebra map. Then $p$ induces a right $H$-comodule algebra structure on $A$, given by $a \mapsto a_{(1)} \otimes p(a_{(2)})$. Let $B= A^{\text{co}H}$. If $B^+A = {\rm Ker}(p)$, then $B\subset A$ is an $H$-Galois extension, with the inverse of the canonical map $\beta$ being given by 
\begin{align*}
\beta^{-1} : ~ A \otimes H  & \longrightarrow  A \otimes_B A\\ 
a\otimes p(a') & \longmapsto a S(a'_{(1)}) \otimes_B a'_{(2)}
\end{align*}
Recall \cite{andruskiewitsch_extensions_1996} that a  sequence of Hopf algebra maps 
\[k \longrightarrow B \overset{i}{\longrightarrow} A \overset{p}{\longrightarrow} H \longrightarrow k\]
is said to be exact  if the following conditions hold:
\begin{enumerate}[label = (\arabic*), parsep=0cm,itemsep=0.1cm,topsep=0cm]
    \item $i$ is injective and $p$ is surjective,
    \item $\Ker(p) = i(B)^+ A = A i(B)^+$,
    \item $i(B) = A^{co H} = {}^{co H}A$,
\end{enumerate}
where $A^{co H}= \{a \in A,~ a_{(1)}\otimes p(a_{(2)}) = a\otimes 1\}$ and ${}^{co H}A= \{a \in A,~ p(a_{(1)})\otimes a_{(2)} = 1\otimes a\}$.
Exact sequences of groups correspond to exact sequences of their group algebras.

Hence, if $k \longrightarrow B \overset{i}{\longrightarrow} A \overset{p}{\longrightarrow} H \longrightarrow k$ is an exact sequence of Hopf algebras, then by the previous consideration  we have that $A$ is an $H$-Galois extension of $i(B)$ where the $H$-comodule structure on $A$ is given by $a \mapsto a_{(1)} \otimes p(a_{(2)})$. 
\end{ex}

\begin{rem}
Let $B\subset A$ be an $H$-Galois extension. Since the canonical map is left $A$-linear, its inverse is uniquely determined by $\kappa : H\to A\otimes_BA$ defined by $\kappa(h)=\beta^{-1}(1\otimes h)$. Denoting $\kappa(h)=h^{\langle 1\rangle}\otimes_B h^{\langle 2\rangle}$, the following identities, which we will not use directly but are essential for some constructions in the next subsection, are \cite[Remark 3.4]{schneider_representation_1990}: 
\begin{align*}
    &b h^{\langle 1\rangle}\otimes_B h^{\langle 2\rangle} = h^{\langle 1\rangle}\otimes_B h^{\langle 2\rangle} b, \quad 
    a_{(0)}a_{(1)}^{\langle1\rangle}\otimes_B a_{(1)}^{\langle 2 \rangle} = 1\otimes_B a, \quad
    h^{\langle 1\rangle}h^{\langle 2\rangle} = \varepsilon(h) \\
    &h^{\langle 1\rangle}\otimes_B (h^{\langle 2\rangle})_{(0)}\otimes (h^{\langle 2\rangle})_{(1)} =  h_{(1)}^{\langle 1\rangle}\otimes_B h_{(1)}^{\langle 2\rangle} \otimes h_{(2)}, \\
    & (h^{\langle 1\rangle})_{(0)}\otimes_B h^{\langle 2\rangle} \otimes (h^{\langle1\rangle})_{(1)} = h_{(2)}^{\langle 1\rangle}\otimes_B h_{(2)}^{\langle 2\rangle} \otimes S(h_{(1)})\\
    &(hk)^{\langle 1\rangle}\otimes_B (hk)^{\langle 2\rangle} =  k^{\langle 1\rangle}h^{\langle 1\rangle}\otimes_B h^{\langle 2\rangle} k^{\langle 2\rangle}
\end{align*}
\end{rem}

\subsection{The Stefan spectral sequences}\label{HG Stefan}

The fundamental tool that we will use in the proof of Theorem \ref{th smoothness} is the Stefan spectral sequence, constructed in \cite{stefan_hochschild_1995} by using the Grothendieck spectral sequence. 

It involves a right (resp. left) $H$-module structures on the cohomology spaces $\HH^q(B,M)$ (resp. on the homology spaces $\HH_q(B,M)$) for $q\geq0$ and any $A$-bimodule $M$ defined in \cite{stefan_hochschild_1995}. For $q=0$, the $H$-action on $\HH^0(B,M)\simeq M^B$ is given by $m\cdot h = h^{\langle 1\rangle} m h^{\langle 2 \rangle}$ for $m\in M^B$ and $h\in H$ and it is extended to $q\geq 0$ using the machinery of cohomological functors \cite[Theorem 7.5]{brown_cohomology_1982} . In \cite{garcia_iglesias_computation_2022}, explicit formulas are given for $q\geq 0$ in the case of a smash product. 

For $q= 0$, the $H$-action on $\HH_0(B,M)\simeq M/[M,B]$ is given by $h\cdot \pi_M(m) = \pi_M\left(h^{\langle 2\rangle} m h^{\langle 1 \rangle}\right)$ for $m\in M$ and $h\in H$, where $\pi_M$ denotes the natural projection of $M$ onto $M/[M,B]$.

\begin{thm}[\cite{stefan_hochschild_1995}]\label{stefan}
Let $H$ be a Hopf algebra, let $B\subset A$ be an $H$-Galois extension and let $M$ be an $A$-bimodule.

\begin{enumerate}[label =(\roman*)]
    \item Assume that $A$ is flat as left and right $B$-module. Then there is a spectral sequence 
\[E^{p,q}_2 = \ext_{H^{op}}^p(k_\varepsilon,\HH^q(B,M)) \implies \HH^{p+q}(A,M)\]
which is natural in M.

    \item Assume that $A$ is projective as left and right $B$-module. Then there is a spectral sequence 
\[E_{p,q}^2 = \tor^H_p(k_\varepsilon,\HH_q(B,M)) \implies \HH_{p+q}(A,M)\]
which is natural in M.
\end{enumerate}
\end{thm}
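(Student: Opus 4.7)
The plan is to realize $\HH^\bullet(A,-)$ and $\HH_\bullet(A,-)$ as the derived functors of a composition and then invoke the Grothendieck spectral sequence \cite[\S 5.8]{weibel_introduction_2008}; I sketch part (i), part (ii) being formally dual.

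For (i), consider the functors
\[
F : \mathcal{M}_{A^e}\longrightarrow \mathcal{M}_H,\ M\mapsto M^B,\qquad G : \mathcal{M}_H\longrightarrow \Vect,\ N\mapsto \Hom_{H^{op}}(k_\varepsilon,N),
\]
with $F$ carrying the right $H$-action $m\cdot h=h^{\langle 1\rangle}m h^{\langle 2\rangle}$. A direct manipulation with the identities for $\beta^{-1}$ establishes $G\circ F = \Hom_{A^e}(A,-)$: an element of $M^B$ is $A$-central iff it is annihilated by the augmentation ideal of $H$. Hence $\HH^\bullet(A,-)=R^\bullet(G\circ F)$. For the identification $R^qF(M)\simeq \HH^q(B,M)$ as $H$-modules, I would use that the two-sided flatness of $A$ over $B$ makes $A^e$ flat as a $B^e$-module, so the restriction $\mathcal{M}_{A^e}\to\mathcal{M}_{B^e}$ admits an exact left adjoint $A\otimes_B-\otimes_B A$ and therefore preserves injectives; an injective $A^e$-resolution of $M$ restricts to a $B^e$-injective resolution computing $\HH^q(B,M)$, and a universal $\delta$-functor argument (in the spirit of \cite[Theorem 7.5]{brown_cohomology_1982}) matches the $H$-action with the prescribed one.

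The crucial step — which I see as the main obstacle — is to show that $F$ carries injective $A^e$-modules to $G$-acyclic (in fact, injective) $H$-modules. I would prove this by exhibiting an exact left adjoint to $F$, namely $T(N) = N\otimes_H(A\otimes_B A)$, where the left $H$-action on $A\otimes_B A$ is transported through $\beta$ from the right multiplication on $H$ in $A\otimes H$ (using the bijective antipode to switch sides). The required adjunction $\Hom_{A^e}(T(N),M)\simeq \Hom_{H^{op}}(N,M^B)$ follows by combining the extension-of-scalars identity $\Hom_{A^e}(A\otimes_B A,-)\simeq (-)^B$ with the tensor-hom adjunction over $H$, and exactness of $T$ reduces, via $\beta$, to the freeness of $A\otimes H$ as a left $H$-module. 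Hence $F$ preserves injectives, Grothendieck's spectral sequence applies, and produces $E_2^{p,q}=\ext^p_{H^{op}}(k_\varepsilon,\HH^q(B,M))\Rightarrow \HH^{p+q}(A,M)$, natural in $M$.

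Part (ii) is the homological analogue: take $F'(M)=M/[B,M]$ with its left $H$-action and $G'(N)=k_\varepsilon\otimes_H N$, so that $G'\circ F' = -\otimes_{A^e}A$. Two-sided projectivity of $A$ over $B$ makes $A^e$ projective over $B^e$, giving $L_qF'(M)\simeq \HH_q(B,M)$. A symmetric adjunction — now producing a right adjoint to $F'$ via $\beta$ — combined with $A^e\otimes_{B^e}B\simeq A\otimes H$ shows that $F'$ sends projective $A^e$-modules to flat (indeed free) left $H$-modules, which are $G'$-acyclic. The homological Grothendieck spectral sequence then yields the second spectral sequence. The essential Hopf-Galois input throughout is the bijectivity of $\beta$ and of $S$, which is precisely what powers the acyclicity transfer in both cases.
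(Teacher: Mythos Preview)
The paper does not give its own proof of this theorem: it is quoted from \cite{stefan_hochschild_1995}, and the only indication of method is the remark that Stefan ``construct[s it] by using the Grothendieck spectral sequence.'' Your proposal follows precisely that route --- factoring $\HH^\bullet(A,-)$ through $M\mapsto M^B$ and then $(-)^H$, checking the acyclicity hypothesis via an adjunction built from $\beta$, and dualizing for homology --- so at the level of strategy you are doing exactly what Stefan does and what the paper alludes to.

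One point to watch: you invoke bijectivity of the antipode (``using the bijective antipode to switch sides'') when transporting the $H$-action through $\beta$, and you list it among the ``essential Hopf-Galois input.'' The theorem as stated here, and in Stefan's paper, does \emph{not} assume $S$ bijective; that hypothesis only enters later in the paper (Remark \ref{ff=proj}, Theorem \ref{th smoothness}) to pass from faithful flatness to projectivity. In Stefan's argument the required $H$-module structure on $A\otimes_B A$ and the acyclicity of $F(\text{injectives})$ are obtained directly from the Galois map and the identities for $\kappa(h)=h^{\langle1\rangle}\otimes_B h^{\langle2\rangle}$, without appealing to $S^{-1}$. So your sketch is correct in spirit but slightly over-hypothesizes; if you want it to match the stated theorem, rework the adjunction so that the $H$-action comes straight from $\kappa$ rather than via a side-switch through $S$.
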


As explained in \cite{stefan_hochschild_1995}, the above spectral sequences are natural generalizations of the usual  Lyndon-Hochschild-Serre spectral sequences in group or Lie algebra (co)homology.

\begin{rem}\label{ff=proj}
It is shown in \cite[Theorem 4.10]{schauenburg_generalized_2005}  that if $H$ is a Hopf algebra with bijective antipode, and $B\subset A$ an $H$-Galois extension with $A$ faithfully flat as a left and right $B$-module, then $A$ is projective as a left and right $B$-module. Thus, in order to invoke one of the above spectral sequences,  we can indifferently assume projectivity or faithful flatness for an $H$-Galois extension $B\subset A$.
\end{rem}

\begin{rem}
    In \cite{stefan_hochschild_1995}, the homology spaces $\HH_\bullet(A,M)$ are defined as the spaces $\tor_\bullet^{A^e}(A,M)$ for $A$ an algebra and $M$ an $A$-bimodule. This is equivalent to Definition \ref{defi : Hochschild}, indeed, if $P$ is an $A$-bimodule, we have $M\otimes_{A^e} P\simeq P\otimes_{A^e} M$ hence, the complexes defining $\tor_\bullet^{A^e}(M,A)$ and $\tor_\bullet^{A^e}(A,M)$ are isomorphic, hence their homology are and  $\tor_\bullet^{A^e}(M,A) \simeq \tor_\bullet^{A^e}(A,M)$.
\end{rem}


\section{Smoothness of Hopf-Galois extensions}\label{Smoothness}

This section is dedicated to the proof of our main theorem. We begin with a result on finiteness of the cohomological dimension.

\begin{prop}\label{prop:finitecd}
Let $H$ be a Hopf algebra with bijective antipode and let $B\subset A$ be an $H$-Galois extension such that $A$ is flat as left and right $B$-module.   We have $\cd(A)\leq \cd(B)+\cd(H)$, and hence if $\cd(B)$ and $\cd(H)$ are finite, so is $\cd(A)$.
\end{prop}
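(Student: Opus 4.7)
The plan is to bound $\HH^n(A,M)$ for every $A$-bimodule $M$ by using Stefan's cohomological spectral sequence (Theorem \ref{stefan}(i)). Since $\cd(A) = \pdim_{A^e}(A)$ coincides with the supremum of those $n$ for which some $\HH^n(A,M) = \ext^n_{A^e}(A,M)$ is nonzero, it suffices to prove $\HH^n(A,M) = 0$ whenever $n > \cd(B) + \cd(H)$ for every $A$-bimodule $M$.

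Because $A$ is flat on both sides over $B$, Theorem \ref{stefan}(i) supplies a convergent first-quadrant spectral sequence
\[
E_2^{p,q} = \ext^p_{H^{op}}(k_\varepsilon, \HH^q(B,M)) \Longrightarrow \HH^{p+q}(A,M).
\]
Vanishing of $E_2^{p,q}$ for $q > \cd(B)$ is immediate, since $\HH^q(B,M) = \ext^q_{B^e}(B,M)$ and $\pdim_{B^e}(B) = \cd(B)$. Vanishing for $p > \cd(H)$ reduces to the inequality $\pdim_{H^{op}}(k_\varepsilon) \leq \cd(H)$, a classical identity for Hopf algebras with bijective antipode: using the adjoint action together with $S$ and $S^{-1}$, one can transfer a length-$\cd(H)$ projective resolution of $H$ over $H^e$ into a projective resolution of $k_\varepsilon$ over $H^{op}$ of the same length (in fact equality $\pdim_{H^{op}}(k_\varepsilon) = \cd(H)$ holds, consistent with Theorem \ref{duality Hopf}).

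Combining the two vanishings confines $E_2$, hence $E_\infty$, to the rectangle $0 \leq p \leq \cd(H)$, $\ 0 \leq q \leq \cd(B)$. The filtration on the abutment then forces $\HH^n(A,M) = 0$ for $n > \cd(B) + \cd(H)$, establishing the claimed inequality $\cd(A) \leq \cd(B) + \cd(H)$; the finiteness statement follows immediately, and the inequality is vacuous in case either $\cd(B)$ or $\cd(H)$ is infinite. The only nontrivial input is the Hopf-algebraic comparison $\pdim_{H^{op}}(k_\varepsilon) \leq \cd(H)$, which is precisely where the bijective-antipode hypothesis intervenes; everything else is routine spectral-sequence bookkeeping.
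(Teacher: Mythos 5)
Your proof is correct and follows essentially the same route as the paper: invoke Stefan's cohomological spectral sequence $E_2^{p,q}=\ext^p_{H^{op}}(k_\varepsilon,\HH^q(B,M))\Rightarrow \HH^{p+q}(A,M)$, observe that $E_2^{p,q}$ vanishes outside the rectangle $p\leq \cd(H)$, $q\leq \cd(B)$, and conclude that $\HH^n(A,M)=0$ for $n>\cd(B)+\cd(H)$. The only difference is that you spell out the standard Hopf-algebra fact $\pdim_{H^{op}}(k_\varepsilon)\leq \cd(H)$, which the paper uses tacitly when asserting the vanishing for $p>\cd(H)$.
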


\begin{proof}
Under those assumptions, we can use Theorem \ref{stefan}. Thus, for every $A$-bimodule $M$, we get a spectral sequence $$E^{pq}_2 = \ext_{H^{op}}^p(k_\varepsilon,\HH^q(B,M)) \implies \HH^{p+q}(A,M).$$
If $\cd(B)$ or $\cd(H)$ is infinite, there is nothing to show, so we assume that these are finite.
 For $p > \cd(H)$ or $q> \cd(B)$, we have $E^{pq}_2 = \{0\} $. Now, if we denote $d_2^{p,q} : E^{pq}_2 \to E^{p+2,q-1}_2$ the differential on the second page of the spectral sequence, we get that for $p> \cd(H) $ or $q>\cd(B)$ the maps $d_2^{p,q} : E^{p,q}_2 \to E^{p+2,q-1}_2$ and $d_2^{p-2,q+1} : E^{p-2,q+1}_2 \to E_2^{p,q}$ are both $0$ hence $\{0\} = E_2^{p,q} \simeq E_3^{p,q} \simeq ..\simeq \HH^{p+q}(A,M)$. We obtain that $\HH^{n}(A,M)=\{0\}$
 for $n>\cd(B)+\cd(H)$, and hence $\cd(A) \leq \cd(H)+\cd(B)$.    
\end{proof}


\begin{proof}[Proof of Theorem \ref{th smoothness}]
Let $H$ be a Hopf algebra with bijective antipode and let $B\subset A$ be an $H$-Galois extension. We assume that $A$ is faithfully flat as left and right $B$-module and  that $H$ and $B$ are homologically smooth algebras.
We know from Proposition \ref{prop:finitecd} that $A$ has finite cohomological dimension, hence to prove that $A$ is homologicallly smooth, 
it remains to prove, by Proposition \ref{prop:FP-FPinfty}, that $A$ is of type $FP_\infty$ as an $A$-bimodule. 

To prove that $A$ is of type $FP_\infty$ as an $A$-bimodule, we will use the characterization of Proposition \ref{characterization FP}, and hence we consider the $A$-bimodule $M = \prod A^e$ where $\prod$ is an arbitrary direct product. We have to show that for $n\geq 1$, one has $\HH_{n}(A,M)=0$ and that $\HH_{0}(A,M)\simeq \tor_0^{A^e}(M,A) \simeq \left(\prod A^e\right) \otimes_{A^e} A \simeq \prod A \simeq \prod \tor_0^{A^e}(A^e,A) \simeq \prod \HH_0(A,A^e)$. 


Under the hypothesis that $A$ is faithfully flat as a right and left $B$-module and regarding Remark \ref{ff=proj}, we can use Theorem \ref{stefan} (ii). Thus, for every $A$-bimodule $M$ we get a spectral sequence $$E_{pq}^2 = \tor^H_p(k_\varepsilon,\HH_q(B,M)) \implies \HH_{p+q}(A,M).$$
The algebra $B$ is of type $FP_\infty$ as a left $B^e$-module hence $\HH_q(B,-) = \tor_q^{B^e}(-,B)$ commutes with direct products. Moreover, $A^e$ is flat as a left and right $B^e$-module \cite[Lemma 2.1]{stefan_hochschild_1995} thus for $q\geq1$, we get that $$\HH_q(B,\prod A^e) \simeq \prod \HH_q(B,A^e) = \{0\}.$$
Hence we have $E_{pq}^2=\{0\}$ for $q>0$.

If $\pi_M$ (resp. $\pi_{A^e}$) denotes the natural projection of $M$ (resp. $A^e$) onto $\HH_0(B,M) \simeq M/[M,B]$ (resp. $\HH_0(B,A^e) \simeq A^e/[A^e,B]$), the natural isomorphism $\mu : \HH_0(B,M) \to \prod \HH_0(B,A^e)$ is given by $\mu(\pi_M\bigl((x_i \otimes y_i)_i\bigr)) = \bigl(\pi_{A^e}(x_i\otimes y_i)\bigr)_i$ for any $x_i,y_i \in A$. Hence, for $h\in H$, we have $$\mu(h\cdot \pi_M\bigl((x_i \otimes y_i)_i\bigr)) = \mu(\pi_M\bigl((h^{\langle2 \rangle}x_i \otimes y_ih^{\langle 1\rangle})_i \bigr)) = \bigl(\pi_{A^e}(h^{\langle 2\rangle}x_i\otimes y_ih^{\langle 1\rangle})\bigr)_i = h\cdot\biggl(\bigl(\pi_{A^e}(x_i\otimes y_i)\bigr)_i\biggr).$$ 
The natural isomorphism $\HH_0(B,M) \simeq \prod \HH_0(B,A^e)$ is thus an isomorphism of $H$-modules. Therefore, since the algebras $B$ and $H$ are homologically smooth, we get, using Proposition \ref{characterization FP}, for $p \geq 0$ 
$$E_{p0}^2=\tor_p^H(k_\varepsilon,\HH_0(B,M)) \simeq \tor_p^H(k_\varepsilon,\prod\HH_0(B,A^e)) \simeq \prod \tor_p^H(k_\varepsilon,\HH_0(B,A^e)).$$
The $A^e$-module $A^e$ is projective, hence \cite[Proposition 4.4]{stefan_hochschild_1995} ensures that \[\tor_p^H(k_\varepsilon,\HH_0(B,A^e))=\{0\} \ \text{for $p\geq 1$.}\] Hence for $p\geq 1$ we have
$$E_{p,0}^2=\tor_p^H(k_\varepsilon,\HH_0(B,M)) \simeq \prod \tor_p^H(k_\varepsilon,\HH_0(B,A^e)) = \{0\}$$ 
We get that $E^2_{p,q}= \tor^H_p(k_\varepsilon,\HH_q(B,M)) = \{0\}$ for $(p,q) \neq (0,0)$, and hence the spectral sequence ensures that $\HH_{n}(A,M)= \{0\}$ for $n>0$.


Finally, using \cite[Proposition 4.2]{stefan_hochschild_1995}, that  $k_\varepsilon$ is of type $FP_\infty$ as a right $H$-module and that $B$ is smooth, we obtain 
\[
\begin{aligned} \HH_0(A, \prod A^e) &\simeq \tor_0^H(k_\varepsilon,\HH_0(B,\prod A^e))\\ &\simeq \tor^H_0(k_\varepsilon,\prod\HH_0(B,A^e))\\ &\simeq \prod\tor^H_0(k_\varepsilon,\HH_0(B,A^e))\\ &\simeq \prod \HH_0(A,A^e).
\end{aligned}
\]
It is not difficult to check that the above isomorphism is the natural map $\mu$ in the third item of Proposition \ref{characterization FP}, the first an last isomorphism being explicit from \cite[Proposition 4.2]{stefan_hochschild_1995}, and the second and third one being obtained from the natural respective maps $\mu$ as well. 
 We thus conclude from Proposition \ref{characterization FP} that $A$ is homologically smooth.
\end{proof}

\begin{rem}
    In the situation of Theorem \ref{th smoothness}, we have seen that $\cd(A)\leq \cd(B)+\cd(H)$. We have not been able to show that the equality $\cd(A) = \cd(B) +\cd(H)$ holds in general, although we suspect it does. It holds in the case of Galois objects \cite{yu_hopf-galois_2016}, of smash products \cite{le_meur_smash_2019}, and in the case of exact sequences of Hopf algebras, as we will show in a forthcoming paper.
\end{rem}

\section{An example}

We finish the paper by presenting an illustrative example.

\begin{defi}
Let $B$ be a commutative $k$-algebra, let $b\in B$ and let $q\in k^\times$ with $q^2 \neq 1$. The $k$-algebra $U_q^{B,b}$ is defined by \[U_q^{B,b} = B\left\langle g,\ g^{-1},\ e,\ f \ \big\vert \ eg = q^{-2} ge,\ gf = q^{-2}fg, \ ef -fe = bg -\frac{1}{q-q^{-1}}g^{-1}\right\rangle\]   
\end{defi}

When $B=k$ and $b=(q+q^{-1})^{-1}$, the algebra $U_q^{B,b}$ is the quantized enveloping algebra $U_q(\mathfrak{sl}_2)$.

\begin{prop}
    If  $B$ is an homologically smooth  commutative  algebra, then $U_q^{B,b}$ is as well homologically smooth, with $\cd(U_q^{B,b}) \leq \cd(B)+3$.
\end{prop}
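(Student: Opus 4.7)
The plan is to realize $B \subset U_q^{B,b}$ as a faithfully flat Hopf-Galois extension with structure Hopf algebra $H := U_q(\mathfrak{sl}_2)$, and then to invoke Theorem \ref{th smoothness} together with Proposition \ref{prop:finitecd}. The Hopf algebra $H$ has bijective antipode and is a classical example of a twisted Calabi-Yau algebra of dimension $3$; in particular, it is homologically smooth with $\cd(H) = 3$. Granted the Hopf-Galois structure, Theorem \ref{th smoothness} delivers the homological smoothness of $U_q^{B,b}$, while Proposition \ref{prop:finitecd} yields the bound $\cd(U_q^{B,b}) \leq \cd(B) + 3$.

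First I would define a coaction $\rho : U_q^{B,b} \to U_q^{B,b} \otimes H$ on generators by $\rho|_B = \mathrm{id}_B \otimes 1$, $\rho(g^{\pm 1}) = g^{\pm 1} \otimes K^{\pm 1}$, $\rho(e) = e \otimes K + 1 \otimes E$ and $\rho(f) = f \otimes 1 + g^{-1} \otimes F$, and check by direct computation that $\rho$ respects every defining relation of $U_q^{B,b}$. The only delicate check is for $[e,f]$: one compares $\rho(ef - fe) = (ef - fe) \otimes K + g^{-1} \otimes (EF - FE)$ with $\rho(bg - (q-q^{-1})^{-1} g^{-1})$, using the commutator relation in $H$ to cancel the unwanted $g^{-1} \otimes K$ terms. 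This makes $U_q^{B,b}$ a right $H$-comodule algebra. A PBW argument (e.g.\ via Bergman's diamond lemma applied to the obvious reduction system) then shows that $\{g^i e^j f^k : i \in \mathbb{Z},\ j,k \geq 0\}$ is a free $B$-basis of $U_q^{B,b}$, which simultaneously identifies $(U_q^{B,b})^{\mathrm{co} H}$ with $B$ and gives that $U_q^{B,b}$ is free, hence faithfully flat, as a $B$-module on either side.

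The remaining task is to verify bijectivity of the canonical map $\beta$. I would construct $\kappa(h) = \beta^{-1}(1 \otimes h)$ on algebra generators of $H$ by
\[
\kappa(K^{\pm 1}) = g^{\mp 1} \otimes_B g^{\pm 1}, \quad \kappa(E) = 1 \otimes_B e - eg^{-1} \otimes_B g, \quad \kappa(F) = g \otimes_B f - gf \otimes_B 1,
\]
check $\beta \circ \kappa = \mathrm{id}$ on these generators, and extend $\kappa$ to all of $H$ using the multiplicative identity $\kappa(hk) = k^{\langle 1 \rangle} h^{\langle 1 \rangle} \otimes_B h^{\langle 2 \rangle} k^{\langle 2 \rangle}$ recalled at the end of Subsection \ref{HG def}. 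A cleaner but essentially equivalent route is to observe that the PBW section $j : H \to U_q^{B,b}$, $K^i E^j F^k \mapsto g^i e^j f^k$, is a right $H$-colinear, convolution-invertible map, thereby exhibiting $B \subset U_q^{B,b}$ as a cleft extension and hence as a Hopf-Galois extension.

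The hard part will be this Galois verification; every other ingredient is either a hypothesis of the proposition, a well-known fact about $U_q(\mathfrak{sl}_2)$, or a formal consequence of the machinery developed in the earlier sections. I expect no difficulties beyond routine PBW bookkeeping, with the cleft-extension viewpoint being the cleanest way to package the argument.
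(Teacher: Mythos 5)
Your proposal follows essentially the same route as the paper: equip $U_q^{B,b}$ with exactly this coaction, realize $B \subset U_q^{B,b}$ as a faithfully flat (indeed cleft) $U_q(\mathfrak{sl}_2)$-Galois extension, use that $U_q(\mathfrak{sl}_2)$ is homologically smooth with $\cd = 3$, and conclude by Proposition \ref{prop:finitecd} and Theorem \ref{th smoothness}. The only difference is one of sourcing: where you sketch a direct PBW/cleaving-map verification of the cleft Galois structure and take the smoothness of $U_q(\mathfrak{sl}_2)$ as well known, the paper cites G\"unther for the former and Chemla for the latter.
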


\begin{proof}
Recall that denoting $E=e$, $K=g$ and $F=f$, the algebra $U_q(\mathfrak{sl}_2)$ has a
Hopf algebra structure defined by 
\[\Delta(E)= 1\otimes E + E\otimes K,\ \Delta(F) = K^{-1}\otimes F + F\otimes 1, \ \Delta(K)= K\otimes K,\]  \[S(E) = -EK^{-1},\ S(F) = -KK,\ S(K) =K^{-1},\ \varepsilon(E)=\varepsilon(F) = 0,\ \varepsilon(K)=1.\]
More generally, $U_q^{B,b}$ is a right $U_q(\mathfrak{sl}_2)$-comodule algebra with $\rho(a)=a\otimes 1$ for any $a\in B$, $\rho(g) = g \otimes K,\ \rho(e) = e \otimes K + 1 \otimes E,\ \rho(f) = f \otimes 1 + g^{-1} \otimes F$. It is shown in \cite[Lemma 16]{gunther_crossed_1999} that $B\subset U_q^{B,b}$ is a cleft (hence in particular free) $U_q(\mathfrak{sl}_2)$-Galois extension. Hence combining \cite[Proposition 3.2.1]{chemla_rigid_2004} (for $\mathfrak{g} = \mathfrak{sl_2}$), Proposition \ref{prop:finitecd} and Theorem \ref{th smoothness}, we obtain the announced result.
\end{proof}

\bibliography{references_smooth}

\begin{thebibliography}{10}

\bibitem{andruskiewitsch_extensions_1996}
{\sc Andruskiewitsch, N., and Devoto, J.}
\newblock Extensions of {Hopf} algebras.
\newblock {\em St. Petersburg Math. J. 7\/} (1996), 17--52.

\bibitem{bieri_homological_1983}
{\sc Bieri, R.}
\newblock {\em Homological dimension of discrete groups}.
\newblock Queen Mary College, 1983.

\bibitem{bieri_groups_1973}
{\sc Bieri, R., and Eckmann, B.}
\newblock Groups with homological duality.
\newblock {\em Invent. Math. 20}, 2 (1973), 103--124.

\bibitem{Bieri_Eckmann74}
{\sc Bieri, R., and Eckmann, B.}
\newblock Finiteness properties of duality groups.
\newblock {\em Comment. Math. Helv. 49\/} (1974), 74--83.

\bibitem{brown_cohomology_1982}
{\sc Brown, K.~S.}
\newblock {\em Cohomology of groups}, vol.~87 of {\em Graduate Texts in Mathematics}.
\newblock Springer-Verlag, 1982.

\bibitem{chemla_rigid_2004}
{\sc Chemla, S.}
\newblock Rigid dualizing complex for quantum enveloping algebras and algebras of generalized differential operators.
\newblock {\em J. Algebra 276}, 1 (2004), 80--102.

\bibitem{garcia_iglesias_computation_2022}
{\sc García~Iglesias, A., and Sánchez, J.}
\newblock On the computation of {Hopf} 2-cocycles with an example of diagonal type.
\newblock {\em Glasg. Math. J. 1965}, 1 (2023), 141--169.

\bibitem{ginzburg_calabi-yau_nodate}
{\sc Ginzburg, V.}
\newblock Calabi-{Yau} algebras.
\newblock arXiv:math/0612139.

\bibitem{gunther_crossed_1999}
{\sc Günther, R.}
\newblock Crossed products for pointed hopf algebras.
\newblock {\em Comm. Algebra 27}, 9 (1999), 4389--4410.

\bibitem{KT81}
{\sc Kreimer, H.~F., and Takeuchi, M.}
\newblock Hopf algebras and {G}alois extensions of an algebra.
\newblock {\em Indiana Univ. Math. J. 30}, 5 (1981), 675--692.

\bibitem{le_meur_smash_2019}
{\sc Le~Meur, P.}
\newblock Smash products of {Calabi}–{Yau} algebras by {Hopf} algebras.
\newblock {\em J. Noncommut. Geom. 13}, 3 (2019), 887--961.

\bibitem{montgomery_hopf_1993}
{\sc Montgomery, S.}
\newblock {\em Hopf algebras and their actions on rings}, vol.~82 of {\em CBMS Regional Conference Series in Mathematics}.
\newblock American Mathematical Society, Providence, RI, 1993.

\bibitem{Schau2004}
{\sc Schauenburg, P.}
\newblock Hopf-{G}alois and bi-{G}alois extensions.
\newblock In {\em Galois theory, {H}opf algebras, and semiabelian categories}, vol.~43 of {\em Fields Inst. Commun.} Amer. Math. Soc., Providence, RI, 2004, pp.~469--515.

\bibitem{schauenburg_generalized_2005}
{\sc Schauenburg, P., and Schneider, H.}
\newblock On generalized {Hopf} galois extensions.
\newblock {\em J. Pure Appl. Algebra 202}, 1-3 (2005), 168--194.

\bibitem{schneider_representation_1990}
{\sc Schneider, H.}
\newblock Representation theory of {Hopf} galois extensions.
\newblock {\em Israel J. Math. 72}, 1-2 (1990), 196--231.

\bibitem{stefan_hochschild_1995}
{\sc Stefan, D.}
\newblock Hochschild cohomology on {Hopf} {Galois} extensions.
\newblock {\em J. Pure Appl. Algebra 103}, 2 (1995), 221--233.

\bibitem{van_den_bergh_relation_1998}
{\sc Van Den~Bergh, M.}
\newblock A relation between {Hochschild} homology and cohomology for gorenstein rings.
\newblock {\em Proc. Amer. Math. Soc. 126}, 5 (1998), 1345--1348.

\bibitem{van_den_bergh_erratum_2002}
{\sc Van Den~Bergh, M.}
\newblock Erratum to : "{A} relation between {Hochschild} homology and cohomology for gorenstein rings".
\newblock {\em Proc. Amer. Math. Soc 130}, no 8 (2002), 2809--2810.

\bibitem{wang_calabi-yau_2017}
{\sc Wang, X., Yu, X., and Zhang, Y.}
\newblock Calabi-{Yau} property under monoidal {Morita}-{Takeuchi} equivalence.
\newblock {\em Pac. J. Math. 290}, 2 (2017), 481--510.

\bibitem{weibel_introduction_2008}
{\sc Weibel, C.}
\newblock {\em Introduction to homological algebra}.
\newblock Cambridge University Press, 2008.

\bibitem{yu_hopf-galois_2016}
{\sc Yu, X.}
\newblock Hopf-{G}alois objects of {C}alabi-{Y}au {H}opf algebras.
\newblock {\em J. Algebra Appl. 15}, 10 (2016), 1650194, 19.

\end{thebibliography}

\vspace{1cm}

Université Clermont Auvergne, CNRS, LMBP, F-63000 CLERMONT-FERRAND, FRANCE

\end{document}